\newtheorem{theorem}{Theorem}[section]
\newtheorem{lemma}[theorem]{Lemma}
\def\qedbox{\hbox{$\rlap{$\sqcap$}\sqcup$}}
\def\MM{{\mathfrak{M}}}\def\CC{{\mathfrak{C}}}\def\QQ{{\mathfrak{Q}}}
\begin{document}
  \title[Geometric Realizations of curvature models]{Geometric realizations of curvature models by {manifolds}
with constant scalar curvature}
\author{M.  Brozos-V\'azquez, P. Gilkey, H. Kang, S. Nik\v cevi\'c, G. Weingart}
\address{MB: Department of Mathematics, University of A Coruna, Spain\\
E-mail: mbrozos@udc.es}
\address{PG: Mathematics Department, University of Oregon\\
   Eugene OR 97403 USA\\
   E-mail: gilkey@uoregon.edu}
\address{HK: School of Mathematics, University of Birmingham,
Birmingham {B15 2TT, UK}\\
E-mail: {kangh@bham.ac.uk}}
\address{SN: Mathematical Institute, Sanu,
Knez Mihailova 35, p.p. 367\\
11001 Belgrade,
Serbia\\
E-mail: stanan@mi.sanu.ac.rs}
\address{GW: Instituto de Matem\'aticas (Cuernavaca), Universidad Nacional Aut\'onoma de M\'exico\\
E-mail:gw@matcuer.unam.mx}
\begin{abstract} We show any Riemannian curvature model can be geometrically realized by a manifold with constant
scalar curvature. We also show that any pseudo-Hermitian curvature model, para-Hermitian curvature model,
hyper-pseudo-Hermitian curvature model, or hyper-para-Hermitian curvature model can be realized by a manifold
with constant scalar and $\star$-scalar curvature. 
\\
Keywords: constant scalar curvature, constant $\star$-scalar curvature, geometric realization, 
hyper-para-Hermitian, hyper-pseudo-Hermitian, 
para-Hermitian, pseudo-Hermitian, pseudo-Riemannian. \\ {\it Mathematics Subject
Classification  2000:} 53B20
\end{abstract}
\maketitle

\section{Introduction}
Let $V$ be a finite dimensional real vector space of dimension $m$. One says that $A\in\otimes^4(V^*)$ is an {\it algebraic
curvature tensor} on $V$ if $A$ satisfies the symmetries of the Riemann curvature tensor:
\begin{equation}\label{eqn-1.a}
\begin{array}{l}
A(x,y,z,w)=-A(y,x,z,w)=A(z,w,x,y),\\
A(x,y,z,w)+A(y,z,x,w)+A(z,x,y,w)=0\,.\end{array}
\end{equation}
We say that $\MM:=(V,\langle\cdot,\cdot\rangle,A)$ is a {\it curvature model} if $A$ is an algebraic curvature tensor on $V$
and if
$\langle\cdot,\cdot\rangle$ is a non-degenerate symmetric bilinear form of signature $(p,q)$ on $V$. Two curvature models
${\MM_1=(V_1,\langle\cdot,\cdot\rangle_1,A_1})$ and
${\MM_2=(V_2,\langle\cdot,\cdot\rangle_2,A_2})$ are said to be isomorphic, and one writes $\MM_1\approx\MM_2$, if there
is an isomorphism
$\phi:V_1\rightarrow V_2$ so that
$$
\phi^*\langle\cdot,\cdot\rangle_2=\langle\cdot,\cdot\rangle_1\quad\text{and}\quad
\phi^*A_2=A_1\,.
$$

Let $\MM$ be a curvature model. Let $\varepsilon_{ij}$ and $A_{ijkl}$ be the components of $\langle\cdot,\cdot\rangle$ and
$A$ relative to a basis $\{e_i\}$ for $V$:
$$\varepsilon_{ij}:=\langle e_i,e_j\rangle\quad\text{and}\quad A_{ijkl}:=A(e_i,e_j,e_k,e_l)\,.$$
Let $\varepsilon^{ij}$ be the
inverse matrix. Adopt the {\it Einstein convention} and sum over repeated indices. The components of the {\it Ricci tensor}
$\rho=\rho_\MM$ and the {\it scalar curvature} $\tau=\tau_\MM$ are then given by:
$$\rho_{il}:=\varepsilon^{jk}A_{ijkl},\qquad\text{and}\qquad\tau:=\varepsilon^{il}\varepsilon^{jk}A_{ijkl}\,.$$

\subsection{Pseudo-Riemannian geometry}
Let
$\mathcal{M}:=(M,g)$ be a pseudo-Riemannian manifold of signature
$(p,q)$. Let $\nabla=\nabla_{\mathcal{M}}$ be the Levi-Civita connection of $\mathcal{M}$ and let
$R=R_{\mathcal{M}}\in\otimes^4T^*M$ be the curvature tensor of $\nabla$:
$$R(x,y,z,w)=g((\nabla_x\nabla_y-\nabla_y\nabla_x-\nabla_{[x,y]})z,w)\,.$$
Let $\MM(\mathcal{M},P):=(T_PM,g_P,R_P)$ for $P\in M$ be the corresponding curvature model.
Relating algebraic properties of the curvature tensor to the underlying geometric properties of the manifold is a central
theme in much of differential geometry {--} see, for example, the discussion of Osserman geometry in
\cite{B06,CDGV06,GKVV99,N03}.

 The following
result is well known and shows that the relations of Equation (\ref{eqn-1.a}) generate the universal symmetries of the Riemann
curvature tensor:

\begin{theorem}\label{thm-1.1}
Let $\MM$ be a curvature model. There exists a real analytic pseudo-Riemannian manifold
$\mathcal{M}$ and a point $P$ of $M$ so that $\MM\approx\MM(\mathcal{M},P)$.
\end{theorem}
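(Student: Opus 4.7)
The plan is to realize $\MM$ by a polynomial metric on $\mathbb{R}^m$ where $m=\dim V$, taking $P$ to be the origin. Identify $V$ with $T_P\mathbb{R}^m$ by sending the chosen basis $\{e_i\}$ to the coordinate vectors $\{\partial_i\}$. I would try an ansatz of the form
$$g_{ij}(x) := \varepsilon_{ij} + c\,(A_{ikjl}+A_{iljk})\,x^k x^l$$
for a single universal constant $c$ to be determined; this is manifestly real analytic (indeed polynomial of degree $2$), symmetric in $i,j$, and takes the value $\varepsilon_{ij}$ at the origin. Hence $g$ has signature $(p,q)$ on some open neighborhood $U$ of $0$, so $\mathcal{M}:=(U,g)$ is a pseudo-Riemannian manifold of the correct signature.

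Because $\partial_k g_{ij}(0)=0$, all Christoffel symbols vanish at the origin, so the formula for $R$ in terms of $\nabla$ collapses to the familiar linear combination of second derivatives of the metric:
$$R_{ijkl}(0) = \tfrac12\bigl(\partial_i\partial_k g_{jl} + \partial_j\partial_l g_{ik} - \partial_i\partial_l g_{jk} - \partial_j\partial_k g_{il}\bigr)(0).$$
Thus once the linear algebra at $0$ works out, nothing further needs to be checked about the full tensorial formula for curvature.

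The crux is then the algebraic step: substitute the second derivatives $\partial_k\partial_l g_{ij}(0)=2c(A_{ikjl}+A_{iljk})$ into the above formula and show that, for the right constant $c$, one obtains exactly $A_{ijkl}$. This reduces to an identity among linear combinations of $A$-components, and it is here that both the pair-symmetry and the first Bianchi identity of (\ref{eqn-1.a}) are used in an essential way; without the latter, only the part of $A$ compatible with symmetrization could be recovered, which is why the symmetrized coefficient $A_{ikjl}+A_{iljk}$ is built into the ansatz.

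The main obstacle is precisely this algebraic verification: one must check that the eight terms produced by the curvature formula collapse, via the Bianchi symmetry, to a single multiple of $A_{ijkl}$, and that the multiple is nonzero so that $c$ can be chosen to normalize it. Once that identity is in hand, the theorem follows immediately with $\mathcal{M}=(U,g)$ and $P=0$.
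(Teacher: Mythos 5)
Your proposal is correct and takes essentially the same route as the paper: a quadratic polynomial perturbation of the flat metric $\varepsilon$ with coefficients built from $A$, the curvature at the origin read off from second derivatives of the metric (all Christoffel symbols vanishing there), and the first Bianchi identity used to collapse the resulting eight terms to a nonzero multiple of $A_{ijkl}$. The verification you defer does go through: with your ansatz one finds $R_{ijkl}(0)=c\,(4A_{ijkl}+2A_{ikjl}-2A_{iljk})=6c\,A_{ijkl}$, so $c=\tfrac16$ works, matching (up to index placement and symmetrization) the paper's choice $g_{ik}=\varepsilon_{ik}-\tfrac13A_{ijlk}x^jx^l$.
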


The following result extends Theorem \ref{thm-1.1} to the category of manifolds with
constant scalar curvature:

\begin{theorem}\label{thm-1.2}
Let $\MM$ be a curvature model. There exists a real analytic pseudo-Riemannian manifold
$\mathcal{M}$ and a point $P$ of $M$ so that $\mathcal{M}$ has constant scalar curvature and so that
$\MM\approx\MM(\mathcal{M},P)$.
\end{theorem}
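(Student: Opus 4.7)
The plan is to apply Theorem~\ref{thm-1.1} to produce a preliminary realization of $\MM$ and then absorb the nonconstancy of the scalar curvature through a real-analytic conformal change whose $2$-jet at the base point vanishes, so that the curvature model at $P$ is unchanged.

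First invoke Theorem~\ref{thm-1.1} to obtain a real-analytic pseudo-Riemannian manifold $(M,g_0)$ and a point $P\in M$ with $\MM(M,g_0,P)\approx\MM$; set $c:=\tau_\MM$, so $\tau(g_0)(P)=c$ although $\tau(g_0)$ need not be locally constant. Choose real-analytic coordinates centered at $P$ with $g_{0,ij}(0)=\varepsilon_{ij}$, and, by a preliminary linear change of basis if necessary, arrange that $\varepsilon^{11}\ne0$; the hypersurface $H:=\{x^1=0\}$ is then non-characteristic for the $g_0$-Laplacian at $P$. I shall look for a real-analytic function $\phi$ defined near $P$ such that $g:=e^{2\phi}g_0$ satisfies $\tau(g)\equiv c$ and such that the full $2$-jet of $\phi$ at $P$ vanishes. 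The latter forces the $2$-jets of $g$ and $g_0$ to coincide at $P$, so $g_P=\varepsilon$ and $R(g)_P=R(g_0)_P$, whence $\MM(M,g,P)\approx\MM$.

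The standard conformal change formula expresses $\tau(e^{2\phi}g_0)$ as $\tau(g_0)$ plus a second-order real-analytic differential operator in $\phi$ whose principal part is a nonzero multiple of $\Delta_{g_0}\phi$. Thus the equation $\tau(e^{2\phi}g_0)=c$ is a semilinear second-order real-analytic PDE in $\phi$ whose principal symbol is that of $\Delta_{g_0}$; by construction this PDE is non-characteristic across $H$ at $P$. The Cauchy--Kowalevski theorem, applied with the real-analytic Cauchy data $\phi|_H=0$ and $\partial_1\phi|_H=0$, produces a real-analytic solution $\phi$ on a neighborhood of $P$.

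The remaining and most delicate step is verifying that $\phi$ has vanishing $2$-jet at $P$. From the vanishing Cauchy data one reads off directly that $\phi(P)=0$, $d\phi(P)=0$, and every tangential or mixed second partial of $\phi$ at $P$ vanishes, being a tangential derivative of $\phi|_H$ or of $\partial_1\phi|_H$. Evaluating the PDE at $P$, the $d\phi$-dependent terms drop out, the constant term $\tau(g_0)(P)=c$ cancels, and what remains is a nonzero multiple of $g_0^{ij}(P)\partial_i\partial_j\phi(P)$; the previously established vanishings reduce this to a nonzero multiple of $g_0^{11}(P)\partial_1^2\phi(P)$, forcing $\partial_1^2\phi(P)=0$. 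Apart from this $2$-jet bookkeeping, which is the main obstacle I anticipate, the proof is a direct application of Cauchy--Kowalevski combined with the observation that conformal changes with vanishing $2$-jet at a point leave both the metric and its curvature tensor unchanged there.
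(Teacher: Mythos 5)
Your proposal is correct and follows essentially the same route as the paper: realize $\MM$ via Theorem~\ref{thm-1.1}, then make a real-analytic conformal change solving $\tau=\mathrm{const}$ by Cauchy--Kovalevskaya with vanishing Cauchy data on a non-characteristic hypersurface, and check that the resulting $2$-jet of the conformal factor vanishes at $P$ so the curvature model there is unchanged (the paper isolates this as Theorem~\ref{thm-3.2}, using the factor $1+2\phi$ in place of your $e^{2\phi}$, which agrees with it to second order). The only cosmetic differences are the choice of hypersurface ($x_1=0$ versus $x_m=0$) and the form of the conformal factor.
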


\subsection{Conformal Geometry}
Let $\MM$ be a curvature model. Let $W=W_\MM$ be the {\it Weyl conformal curvature tensor}.
One says that $\MM$ is {\it conformally flat} if $W_\MM=0$.
\begin{theorem}\label{thm-1.3}
Let $\MM$ be a conformally flat curvature model. There exists a real analytic
conformally flat pseudo-Riemannian manifold
$\mathcal{M}$ and a point $P$ of $M$ so that $\mathcal{M}$ has constant scalar curvature and so that
$\MM\approx\MM(\mathcal{M},P)$.
\end{theorem}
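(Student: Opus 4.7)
Since $W_\MM=0$, the algebraic curvature tensor $A$ is reconstructed from $\langle\cdot,\cdot\rangle$ and $\rho_\MM$ by the standard Weyl decomposition, so any isomorphism of $(V,\langle\cdot,\cdot\rangle,\rho_\MM)$ with the corresponding object of a conformally flat manifold $(M,g)$ will automatically upgrade to an isomorphism of curvature models. The plan is therefore to realize the prescribed metric and Ricci tensor at a point by a real analytic conformally flat pseudo-Riemannian manifold with constant scalar curvature equal to $\tau_\MM$.

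The natural ansatz is a conformal deformation $g=e^{2f}g_0$ of a flat metric $g_0$ of signature $(p,q)$ on a neighborhood of $0\in\mathbb{R}^m$, since such metrics are automatically conformally flat. Impose $f(0)=0$ and $df(0)=0$, so that $g_P=(g_0)_P$ is identified with $\langle\cdot,\cdot\rangle$ by choosing the right basis. The classical conformal transformation formula then expresses $\rho_g(0)$ as a linear function of the Hessian of $f$ at $0$, and for $m\ge 3$ this relation is invertible: one solves uniquely for the Hessian, obtaining minus the Schouten tensor $P_\MM$ of $\MM$. The trace of this prescription is automatically consistent with the required value $\tau_g(0)=\tau_\MM$, because the trace of $P_\MM$ equals $\tau_\MM/(2(m-1))$. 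The cases $m=1,2$ are trivial and handled separately using a metric of constant sectional curvature.

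The constant scalar curvature condition $\tau_g\equiv\tau_\MM$ translates into a single quasilinear second order partial differential equation in $f$ of Yamabe type. The final step is to apply the Cauchy--Kovalevskaya theorem: solve the equation for $\partial_1^2 f$ to put it in Kovalevskaya form, and prescribe real analytic Cauchy data along the hyperplane $\{x^1=0\}$ whose jets at the origin reproduce all components of the desired $2$-jet of $f$ except the pure $\partial_1^2$ component. The main obstacle is compatibility: the value of $\partial_1^2 f(0)$ forced by the PDE must agree with the prescribed Hessian component $-(P_\MM)_{11}$. This boils down to the trace identity $\tau_\MM=\mathrm{tr}_{\langle\cdot,\cdot\rangle}\rho_\MM$, which is built into the definition of a curvature model. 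Cauchy--Kovalevskaya then produces a real analytic solution $f$, and $g=e^{2f}g_0$ is the required metric.
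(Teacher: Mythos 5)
Your proposal is correct, and it rests on the same two pillars as the paper's argument: a conformal multiple of the flat metric (automatically conformally flat, with the curvature model recovered from the Ricci tensor via the Weyl decomposition) together with the Cauchy--Kovalevskaya theorem applied to the constant-scalar-curvature equation. The organization, however, is genuinely different. The paper proceeds in two steps: Lemma \ref{lem-3.1} first realizes the model by an explicit \emph{quadratic} conformal factor $(1+\phi)\langle\cdot,\cdot\rangle$, and then Theorem \ref{thm-3.2} --- a general statement valid for \emph{any} real analytic pseudo-Riemannian metric, which is also what proves Theorem \ref{thm-1.2} --- applies a second conformal deformation with \emph{zero} Cauchy data and shows that the full $2$-jet of the correction vanishes at $P$, so the curvature there is untouched. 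You instead make a single application of Cauchy--Kovalevskaya with \emph{nonzero} Cauchy data encoding the desired $2$-jet of $f$ (namely $\nabla^2f(0)=-P_\MM$, which is the correct prescription), and replace the paper's ``vanishing $2$-jet'' argument by the compatibility check that the PDE-forced component $\partial_1^2f(0)$ agrees with the prescribed one via the trace identity $\operatorname{tr}P_\MM=\tau_\MM/(2(m-1))$; that check is valid. Two small remarks: the paper's stated Theorem \ref{thm-2.1} only allows zero Cauchy data, so to invoke it literally you would subtract off the quadratic polynomial carrying your prescribed jets --- at which point your proof essentially factors into the paper's two steps; and your one-step argument, being tied to the conformally flat ansatz, does not yield the general Theorem \ref{thm-1.2}, which is the main structural advantage of the paper's route.
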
 

\subsection{Pseudo-Hermitian and para-Hermitian geometry} Let $J$ be a linear map of $V$ and let \
$\MM=(V,\langle\cdot,\cdot\rangle,A)$ be a curvature model.
One says that $J$ is a {\it pseudo-Hermitian structure} if
$$J^2=-\operatorname{id}\quad\text{and}\quad J^*\langle\cdot,\cdot\rangle=\langle\cdot,\cdot\rangle\,.$$
Similarly, one says that $J$ is a {\it para-Hermitian structure} if
$$J^2=\operatorname{id}\quad\text{and}\quad J^*\langle\cdot,\cdot\rangle=-\langle\cdot,\cdot\rangle\,.$$
Note that pseudo-Hermitian structures exist if and only if both $p$ and $q$ are
even; para-Hermitian structures exist if and only if $p=q$. Let
${\CC:=(V,\langle\cdot,\cdot\rangle,J,A)}$ be the associated {\it pseudo-Hermitian curvature model} (resp. {\it
para-Hermitian curvature model}). In either case,
define the {\it
$\star$-scalar curvature} $\tau^\star=\tau^\star_\CC$ by setting
$$\tau^\star:=\left\{
\begin{array}{lll}
\phantom{-}\varepsilon^{il}\varepsilon^{jk}A(e_i,e_j,Je_k,Je_l)&\text{if}&\CC\text{ is pseudo-Hermitian},\\
-\varepsilon^{il}\varepsilon^{jk}A(e_i,e_j,Je_k,Je_l)&\text{if}&\CC\text{ is para-Hermitian}\,.
\end{array}\right.$$

One says that $\mathcal{C}:=(M,g,J)$ is an {\it almost pseudo-Hermitian manifold} (resp. {\it almost para-Hermitian
manifold}) if ${\CC(\mathcal{C},P):=(T_PM,g_P,J_P,R_P)}$ is a pseudo Hermitian (resp. para-Hermitian) curvature model for
every
$P\in M$. We do not assume that the structure $J$ on $M$ is integrable as this imposes additional curvature identities
\cite{gray}; we will return to this question in a subsequent paper. Almost pseudo-Hermitian geometry has been studied
extensively. We refer to
\cite{CFG} for further information concerning almost para-Hermitian geometry as it is important as well. For example,
para-Hermitian geometry enters in the study of Osserman Walker metrics of signature $(2,2)$ \cite{nuevo}, it is important
in the study of homogeneous geometries
\cite{Gada}, and it is relevant to the study of Walker manifolds with degenerate self-dual Weyl curvature operators
\cite{Alex}. We refer to \cite{FFS} for information concerning almost-Hermitian geometry.

\begin{theorem}\label{thm-1.4}
Let $m\ge4$. Let ${\CC=(V,\langle\cdot,\cdot\rangle,J,A)}$ be a pseudo-Hermitian (resp. para-Hermitian) curvature model.
There exists a real analytic almost pseudo Hermitian (resp. almost para-Hermitian) manifold
$\mathcal{C}=(M,g,J)$ and a point $P$ of $M$ so that $\mathcal{C}$ has constant scalar curvature, so that $\mathcal{C}$ has
constant {$\star$}-scalar curvature, and so that
${\CC\approx(T_PM,g_P,J_P,R_P)}$.
\end{theorem}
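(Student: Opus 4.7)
The plan is to adapt the two-step strategy behind Theorems~\ref{thm-1.1}--\ref{thm-1.3} to the almost pseudo-Hermitian (resp.\ para-Hermitian) setting. The first step produces an initial realization $(M,g_0,J)$ of $\CC$ and the second perturbs $g_0$ in a $J$-compatible way so that both $\tau$ and $\tau^\star$ become globally constant, while leaving the $2$-jet at the base point $P$ (and hence the curvature model at $P$) untouched.

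For the realization step, I take $M:=V$, transport $J$ by the canonical trivialization $TM\cong V\times V$ to a constant almost complex (resp.\ paracomplex) structure on $M$, and look for a real-analytic metric of the form
$$
(g_0)_{ij}(x)=\varepsilon_{ij}+\tfrac13 A_{ikjl}\,x^k x^l+O(|x|^3).
$$
Standard calculations as in Theorem~\ref{thm-1.1} show that the curvature of $g_0$ at $0$ is $A$; the algebraic symmetries encoded in $\CC$ being a pseudo-Hermitian (resp.\ para-Hermitian) curvature model ensure that the $O(|x|^2)$ coefficient is compatible with $J^*g_0=\pm g_0$, so this identity already holds, at least to high order, and it can be arranged to hold exactly by choosing the higher-order tail of $g_0$ to be built from $J$-invariant tensors. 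Thus $(M,g_0,J)$ is almost pseudo-Hermitian (resp.\ para-Hermitian) and realizes $\CC$ at $P=0$.

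For the constancy step, I look for $g=g_0+h$ where the symmetric $2$-tensor $h$ vanishes to order at least three at $0$ and satisfies $J^*h=\pm h$ pointwise. The order-three vanishing guarantees that the $2$-jet of $g$ at $0$ coincides with that of $g_0$, so $R_g(0)=A$ and the full curvature model at $0$ is preserved; the pointwise $J$-compatibility guarantees $J^*g=\pm g$, so $(M,g,J)$ remains in the desired category. To achieve the scalar curvature conditions, I would take $h$ from a two-parameter family of deformations adapted to $J$: combine a conformal factor $g\mapsto e^{2\phi}g$ (which automatically preserves $J^*g=\pm g$) with an addition $f(x)\,\Psi$, where $\Psi$ is a fixed $J$-compatible symmetric $2$-tensor vanishing to order three at $0$ (constructed from the Kähler/para-Kähler form $\omega(\cdot,\cdot)=\langle J\cdot,\cdot\rangle$ or its tensor square). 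Writing out $\tau_g$ and $\tau^\star_g$ for this ansatz gives a coupled system of two scalar equations in the two real-analytic unknowns $\phi$ and $f$, which are prescribed to equal the constants $\tau_\CC$ and $\tau^\star_\CC$ of the model.

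The main obstacle is exactly the simultaneous satisfaction of these two pointwise constancy conditions inside the $J$-compatible category: any single conformal deformation changes both $\tau$ and $\tau^\star$ in a tightly coupled manner, so one has to show that the linearized map $(\phi,f)\mapsto(\delta\tau,\delta\tau^\star)$ at $g_0$ is surjective onto the functions of prescribed value at $0$. Assuming $m\ge 4$ (which ensures enough room inside the $J$-invariant symmetric $2$-tensors for $\Psi$ to be chosen with non-trivial, linearly independent trace and $\star$-trace against the Ricci and $\star$-Ricci operators), a direct computation of the principal symbols of $\delta\tau$ and $\delta\tau^\star$ should give this surjectivity, and a Cauchy--Kovalevskaya argument then produces a real-analytic solution with the required vanishing at $0$. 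The remaining verification is purely algebraic: confirm that the constructed $h$ indeed satisfies $J^*h=\pm h$, which reduces to checking this property on the building blocks $\Psi$ and on the conformal factor $e^{2\phi}g_0$, both of which are manifestly $J$-compatible by construction.
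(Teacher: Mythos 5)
Your overall two-step strategy (realize the model first, then deform by a two-parameter family and apply Cauchy--Kovalevskaya to make $\tau$ and $\tau^\star$ constant) matches the paper, but your realization step contains a genuine error. You claim that because $\CC$ is a pseudo-Hermitian (resp.\ para-Hermitian) curvature model, the quadratic coefficient $-\frac13A_{ijlk}x^jx^l$ of the Theorem~\ref{thm-1.1} metric is automatically compatible with the constant extension of $J$. This is false: the definition of the model only imposes $J^2=\mp\operatorname{id}$ and $J^*\langle\cdot,\cdot\rangle=\pm\langle\cdot,\cdot\rangle$, and places no constraint whatsoever on $A$ relative to $J$; in general $A(Je_i,e_j,e_l,Je_k)\ne A(e_i,e_j,e_l,e_k)$, so the quadratic term already breaks $J^*g_0=\pm g_0$ at second order. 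You cannot repair this by symmetrizing $g_0$ over $J$ (that changes the $2$-jet and hence the curvature at $P$), nor by adjusting "the higher-order tail" (the obstruction sits at order two). The paper's Lemma~\ref{lem-4.1} exists precisely to handle this point: it keeps the Theorem~\ref{thm-1.1} metric $g$ untouched and instead conjugates $J$ by the real-analytic square root $\psi=S(\Psi)$ of the matrix $\Psi$ defined by $g(x,y)=\langle\Psi x,y\rangle$, setting $J_1=\psi^{-1}J\psi$; then $J_1^*g=-\varrho g$ holds exactly and $J_1(P)=J$ because $\psi(P)=\operatorname{id}$.

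The deformation step also has a flaw as written. You let the unknown $f$ multiply a tensor $\Psi$ that itself vanishes to order three at $0$; then the coefficient of $f_{mm}$ in the linearized equations vanishes at the origin, the hypothesis $\det\psi^{mm}(0)\ne0$ of Theorem~\ref{thm-2.1} fails, and Cauchy--Kovalevskaya does not apply. The correct arrangement, used in Theorem~\ref{thm-4.2}, is the opposite: the $J$-invariant tensors $dx_1\circ dx_1-\varrho Jdx_1\circ Jdx_1$ and $dx_m\circ dx_m-\varrho Jdx_m\circ Jdx_m$ are nonzero at $P$, and it is the scalar unknowns $\xi,\eta$ whose $0$- and $1$-jets vanish on the initial hypersurface; the vanishing of $\xi_{mm}(0)$ and $\eta_{mm}(0)$ then follows from the equations themselves, which is what preserves $R(P)$. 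Finally, the linear independence of $(\delta\tau,\delta\tau^\star)$ in the two unknowns is the crux and must actually be computed --- the paper exhibits the invertible system $\tau=-4\varepsilon^{11}\varepsilon^{mm}\xi_{mm}-2\eta_{mm}+\dots$ and $\tau^\star=0\cdot\xi_{mm}-2\eta_{mm}+\dots$ --- whereas asserting that a symbol computation "should" give surjectivity leaves the essential point unverified.
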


\subsection{Hyper-pseudo-Hermitian and hyper-para-Hermitian geometry} Fix a curvature model
$\MM=(V,\langle\cdot,\cdot\rangle,A)$. Let
$\mathcal{J}:=\{J_1,J_2,J_3\}$ be a triple of linear maps of $V$. We say that $\mathcal{J}$ is a {\it hyper-pseudo-Hermitian
structure} if $J_1$, $J_2$, $J_3$ are pseudo-Hermitian structures and if we have the quaternion identities:
$$J_1^2=J_2^2=J_3^2=-\operatorname{id}\quad\text{and}\quad J_1J_2=-J_2J_1=J_3\,.$$
Similarly, we say that $\mathcal{J}$ is a {\it hyper-para-Hermitian structure} if $J_1$ is a pseudo-Hermitian structure, if
$J_2$ and $J_3$ are para-Hermitian structures, and if we have the para-quaternion identities:
$$J_1^2=-J_2^2=-J_3^2=-\operatorname{id}\quad\text{and}\quad J_1J_2=-J_2J_1=J_3\,.$$
Let ${\QQ:=(V,\langle\cdot,\cdot\rangle,\mathcal{J},A)}$ be the associated
{\it hyper-pseudo-Hermitian curvature model} (resp. {\it hyper-para-Hermitian curvature model)}. We
refer to \cite{CS04,IZ2005,Kam99} for further details concerning such structures. We define:
$$\tau^\star_\QQ:=\tau^\star_{J_1}+\tau^\star_{J_2}+\tau^\star_{J_3}\,.$$

The structure group of a hyper-pseudo-Hermitian structure $\mathcal{J}$ is $SO(3)$ and of a hyper-para-Hermitian structure is
$SO(2,1)$ since we must allow for reparametrizations; $\tau^\star_\QQ$ is invariant under this structure group and does not
depend on the particular parametrization chosen. {We say that $(M,g,\mathcal{J})$ is a {\it hyper-pseudo-Hermitian} manifold 
or a {\it hyper-para-Hermitian manifold} if $\mathcal{J}_P$ defines the appropriate structure on $(T_PM,g_P)$ for all points
$P$ of
$M$.}

\begin{theorem}\label{thm-1.5}
Let $m\ge8$. Let ${\QQ=(V,\langle,\cdot\rangle,\mathcal{J},A)}$ be an hyper-pseudo-Hermitian (resp.
hyper-para-Hermitian) curvature model. There exists a real analytic almost hyper-pseudo-Hermitian (resp. almost
hyper-para-Hermitian) manifold
$\mathcal{Q}$ and a point $P$ of $M$ so that $\mathcal{Q}$ has constant scalar curvature, so that $\mathcal{Q}$ has constant
{$\star$}-scalar curvature, and so that
${\QQ\approx(T_PM,g_P,\mathcal{J}_P,R_P)}$.
\end{theorem}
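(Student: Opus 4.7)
The plan is to follow the template of Theorem~\ref{thm-1.4}, promoting the single-$J$ construction there to one which respects all three structures $J_1$, $J_2$, $J_3$ of $\mathcal{J}$ simultaneously. Take $M:=V$, equip it with the constant linear maps $\mathcal{J}$ (so the almost hyper-structure is flat as a bundle endomorphism), and look for a real analytic metric of the form $g(x)=\langle\cdot,\cdot\rangle+h(x)$, where $h$ is an analytic symmetric $2$-tensor field vanishing at the origin whose value at every point lies in the subspace $\mathcal{H}\subset S^2V^*$ of bilinear forms compatible with $\mathcal{J}$ (hyper-Hermitian or hyper-para-Hermitian, as appropriate). Because $\mathcal{J}$ is constant, $\mathcal{H}$ is a fixed linear subspace and the hyper-compatibility condition on $g$ is automatic once $h$ takes values in $\mathcal{H}$.

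The first step is to realize $A$ at the origin. I would use the classical ansatz $h_{ij}(x)=\tfrac13 A_{ikjl}x^kx^l$, now with the quadratic coefficient replaced by its projection into $\mathcal{H}$ in the $(i,j)$ slot. The essential algebraic fact is that the resulting map from $\mathcal{H}$-valued quadratic functions onto the space of hyper-Hermitian (resp.\ hyper-para-Hermitian) algebraic curvature tensors is surjective; since $A$ lies in that target space by hypothesis and the unprojected classical ansatz is already surjective onto all curvature tensors, this reduces by equivariance under the structure group to a surjectivity statement on the $\mathcal{J}$-invariant subspace, which is the hyper analog of the Kähler realization.

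Second, I would arrange the two constancy conditions by adding an $\mathcal{H}$-valued correction $h'$ vanishing to order at least three at $P$, so that $A_P$ is left undisturbed. The flexibility is large: conformal rescalings $g\mapsto e^{2\phi}g$ automatically preserve hyper-compatibility because $J_\alpha^*g=\pm g$ implies $J_\alpha^*(e^{2\phi}g)=\pm e^{2\phi}g$, and more generally any analytic $\mathcal{H}$-valued perturbation vanishing to high enough order at $P$ is admissible. Expanding the requirement that $\tau_g$ and $\tau^\star_{\QQ,g}$ be constant yields a pair of analytic equations on the Taylor coefficients of $h'$; one solves these order by order, either by Cauchy--Kovalevskaya or, more concretely, by truncating at a finite polynomial degree, since the two scalars depend only on the $2$-jet of $g$.

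The main obstacle is verifying that the linearization of $(\tau,\tau^\star_\QQ)$, restricted to $\mathcal{H}$-valued deformations, has image of the correct rank at each Taylor order. Simultaneous compatibility with three structures shrinks $\mathcal{H}$ substantially compared to the unconstrained case of Theorem~\ref{thm-1.4}, and it is precisely the hypothesis $m\ge8$ that guarantees $\dim\mathcal{H}$ is large enough to independently adjust both scalar invariants, the low-dimensional hyper-structures in dimension $m=4$ being too rigid. Once this surjectivity is in place, the remainder is routine jet bookkeeping and, together with the first step, yields the desired real analytic realization.
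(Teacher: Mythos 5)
There is a genuine gap, and it sits in your first step. A hyper-pseudo-Hermitian curvature model imposes \emph{no} compatibility between $A$ and $\mathcal{J}$: $A$ is an arbitrary algebraic curvature tensor satisfying only Equation (\ref{eqn-1.a}). In your construction $\mathcal{J}$ is constant in the coordinate frame and the metric is forced to take values in the fixed subspace $\mathcal{H}\subset S^2(V^*)$ of $\mathcal{J}$-invariant forms; the curvature at $P$ is then determined by the $2$-jet of the metric, i.e.\ by an element of $\mathcal{H}\otimes S^2(V^*)$. But $\dim\mathcal{H}=2n^2-n$ for $m=4n$ (quaternion-Hermitian matrices), so for $m=8$ the source has dimension $6\cdot 36=216$, while the space of algebraic curvature tensors has dimension $m^2(m^2-1)/12=336$. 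The restricted realization map cannot be surjective, so your "reduction by equivariance" cannot be repaired: restricting a surjection to a subspace of the domain need not (and here provably does not) preserve surjectivity. This is exactly the difficulty the paper's Lemma \ref{lem-4.1} is built to avoid, and it goes the opposite way: keep the metric $g$ of Theorem \ref{thm-1.1}, which realizes $A$ exactly but is not $\mathcal{J}$-compatible away from $P$, write $g(x,y)=\langle\Psi x,y\rangle$, take the real analytic square root $\psi=S(\Psi)$ with $\psi(P)=\operatorname{id}$, and replace each $J_\alpha$ by $\psi^{-1}J_\alpha\psi$. This restores pointwise compatibility everywhere without changing the model at $P$.

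Your second step is closer in spirit to the paper's, but it is asserted rather than proved at the one point that matters: one must exhibit a concrete family of $\mathcal{J}$-compatible deformations on which the linearization of $(\tau,\tau^\star_\QQ)$ has rank two. The paper uses $h_{\xi,\eta}=g+2\xi\Xi_1+2\eta\Xi_m$ with $\Xi_i=dx_i\circ dx_i-\sum_\alpha\varrho_\alpha J_\alpha^*dx_i\circ J_\alpha^*dx_i$ and computes the $2\times2$ matrix $\tau=-8\varepsilon^{11}\varepsilon^{mm}\xi_{mm}-6\eta_{mm}+\dots$, $\tau^\star=-6\eta_{mm}+\dots$, which is invertible; the role of $m\ge8$ is that $x_1$ and $x_m$ then lie in distinct $\mathcal{J}$-invariant $4$-dimensional blocks, not that "$\dim\mathcal{H}$ is large". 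Finally, you cannot simultaneously prescribe third-order vanishing and solve the PDE by Cauchy--Kovalevskaya; the paper imposes only the Cauchy data $\xi(y,0)=\xi_m(y,0)=\eta(y,0)=\eta_m(y,0)=0$ and then deduces from the equation itself that the remaining $2$-jets $\xi_{mm}(0),\eta_{mm}(0)$ vanish, so that $R_h(P)=R_g(P)$.
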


{The problems we are considering are related to the {\it Yamabe problem} where one seeks to find a Riemannian metric of
constant scalar curvature in the conformal class of a given compact Riemannian manifold of dimension $m\ge 3$; this has been
solved \cite{A76,R84,T68,Y60}. The complex analogue of the Yamabe problem is to find an almost Hermitian metric of constant
scalar curvature in the conformal class of a given compact almost Hermitian manifold of dimension $m\ge 4$; this problem also
has been solved \cite{RS03}. Our setting is quite different as we wish to fix the curvature tensor at a point and thus we work
purely locally.}

\subsection{Outline of the paper} In Section \ref{sect-2}, we
review the Cauchy-Kovalevskaya Theorem as this is central to our discussion. In Section \ref{sect-3}, we prove
Theorems
\ref{thm-1.1},
\ref{thm-1.2}, and \ref{thm-1.3}. In Section
\ref{sect-4}, we prove Theorems \ref{thm-1.4} and \ref{thm-1.5}.

\section{The Cauchy-Kovalevskaya Theorem}\label{sect-2}
In this Section, we state the version of the Cauchy-Kovalevskaya Theorem that we shall need; we refer to Evans \cite{E}
pages 221-233 for the proof. Introduce coordinates $x=(x_1,...,x_m)$ on $\mathbb{R}^m$ and let
$\partial_i:=\frac{\partial}{\partial x_i}$. Set
$x=(y,x_m)$ where
$y=(x_1,...,x_{m-1})\in\mathbb{R}^{m-1}$. Let
$W$ be an auxiliary real vector space. In Section \ref{sect-3}, we will take $W=\mathbb{R}$ to consider a single scalar
equation and in Section
\ref{sect-4}, we will take $W=\mathbb{R}^2$ to consider a pair of scalar equations to deal with both the scalar curvature and
{$\star$}-scalar curvature. Let
$$u:=(u_0,u_1,...,u_m)\in W\otimes\mathbb{R}^{m+1}\,.$$
We suppose given a real analytic function $\psi(x,u)$ taking values in
$W$ and a collection of real analytic functions $\psi^{ij}(x,u)=\psi^{ji}(x,u)$ taking values in $\operatorname{End}(W)$ which
are defined near
$0$. Given a real analytic function
$U:\mathbb{R}^m\rightarrow W$ which is defined near $x=0$, one sets $u(x):=(u_0(x),...,u_m(x))$ where
$$u_0(x):=U(x),\quad
u_1(x):=\partial_1U(x),\quad...{,}\quad u_m(x):=\partial_mU(x)\,.
$$

\begin{theorem}\label{thm-2.1}
{\bf [Cauchy-Kovalevskaya]} If $\det\psi^{mm}(0)\ne0$, there is $\varepsilon>0$ and a unique real analytic $U$
defined {for}
$|x|<\varepsilon$ which satisfies the following equations:
\begin{eqnarray*}
&&\psi^{ij}(x,u(x))\partial_i\partial_jU(x)
+\psi(x,u(x))=0,\\
&&U(y,0)=0,\quad\text{and}\quad\partial_mU(y,0)=0\,.\
\end{eqnarray*}
\end{theorem}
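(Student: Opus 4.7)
The plan is to construct the unique real analytic solution as a convergent power series at the origin, with coefficients determined recursively from the PDE and the initial data, and then to verify convergence via the classical method of majorants. Throughout I would work with the formal expansion
$$U(x)=\sum_{\alpha}c_\alpha x^\alpha,$$
where $\alpha=(\alpha_1,\dots,\alpha_m)$ ranges over multi-indices and $c_\alpha\in W$.

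First I would use the initial conditions $U(y,0)=0$ and $\partial_m U(y,0)=0$ to fix every coefficient $c_\alpha$ with $\alpha_m\le 1$ to be zero. Next I would exploit the hypothesis $\det\psi^{mm}(0)\neq 0$, together with continuity, to invert $\psi^{mm}$ on a polydisc about the origin, so that the PDE can be rewritten as
$$\partial_m^2U = -(\psi^{mm})^{-1}\Bigl(\sum_{(i,j)\neq(m,m)}\psi^{ij}\partial_i\partial_jU+\psi\Bigr).$$
Substituting the Taylor expansions of $\psi^{ij}$, $(\psi^{mm})^{-1}$, $\psi$, and $U$ and matching the coefficient of $x^\alpha$, the left-hand side contributes $(\alpha_m+2)(\alpha_m+1)c_{\alpha+2e_m}$, while each term on the right-hand side involves only $c_\beta$ with $\beta_m\le\alpha_m+1$. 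Induction on $\alpha_m$ therefore determines all $c_\alpha$ uniquely as polynomials, with universal positive rational coefficients, in the Taylor coefficients of $\psi^{ij}$, $(\psi^{mm})^{-1}$, $\psi$, and previously determined values of $c_\beta$. This gives uniqueness of any formal analytic solution, and also defines a candidate $U$.

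The main obstacle is convergence: a priori, nothing prevents the recursion from producing coefficients that grow too fast. I would handle this by the method of majorants of Cauchy and Kovalevskaya. Real analyticity of the data gives bounds of the shape $|\psi^{ij}_\alpha|,|\psi_\alpha|\le CM^{|\alpha|}/\alpha!$ on a common polydisc, and a standard scalar majorant of the form $\tilde\psi(x,u)=CM\bigl(1-M(x_1+\cdots+x_m+u_0+\cdots+u_m)\bigr)^{-1}$ with suitably chosen $\tilde\psi^{ij}$ dominates all $\psi^{ij}$ and $\psi$ coefficientwise. The majorant problem can be solved in closed form as an algebraic function $\tilde U$ with explicit positive radius of convergence; since the majorant recursion involves only positive operations, one verifies by induction on $|\alpha|$ that $|c_\alpha|\le\tilde c_\alpha$ entry by entry. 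Hence the series for $U$ converges on a neighborhood of the origin, defines a real analytic function there, and by construction satisfies the PDE and the prescribed initial data, completing both existence and uniqueness.
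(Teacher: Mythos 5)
The paper does not actually prove Theorem~2.1: it is stated as a quoted classical result, with the proof deferred entirely to Evans (pages 221--233). So there is no ``paper proof'' to match; what you have written is the classical Cauchy--Kovalevskaya argument itself, and in outline it is correct. Your route --- solve for $\partial_m^2U$ using $\det\psi^{mm}(0)\neq0$, determine the Taylor coefficients recursively by induction on $\alpha_m$ starting from the flat initial data $c_\alpha=0$ for $\alpha_m\le1$, and then establish convergence by majorizing the data with a geometric-series majorant whose associated problem is solvable in closed form --- is exactly the method underlying the cited source; the main cosmetic difference is that Evans first reduces the quasilinear second-order equation to a first-order quasilinear system and runs the majorant argument there, whereas you run it directly on the second-order form. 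Two points worth tightening if this were to be written out in full: (i) the positivity needed for the majorant comparison applies to the recursion for the \emph{solved} equation $\partial_m^2U=F(x,u,\{\partial_i\partial_jU\}_{(i,j)\neq(m,m)})$, so the coefficients $c_{\alpha+2e_m}$ are polynomials with nonnegative coefficients in the Taylor coefficients of $F$, not of $\psi^{ij}$, $(\psi^{mm})^{-1}$ and $\psi$ separately with their signs --- one majorizes $|F_\gamma|$ and lets the positivity of the majorant recursion do the rest; and (ii) one must shrink $\varepsilon$ so that $u(x)$ remains in the domain where the data are analytic and $\psi^{mm}$ remains invertible. Neither point is an obstacle; your sketch is sound.
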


\section{The Proof of Theorems \ref{thm-1.1}-\ref{thm-1.3}}\label{sect-3}

Although Theorem \ref{thm-1.1} is well known, we give the proof for the sake of completeness. Let $M$ be
a small neighborhood of $0\in V$, let $P=0$, let $(x_1,...,x_m)$ be the system of local coordinates on $V$ induced by a
basis $\{e_i\}$ for $V$, and let
$$
\textstyle g_{ik}:=\varepsilon_{ik}-\frac13A_{ijlk}x^jx^l\,.
$$
Clearly $g_{ik}=g_{ki}$. As $g_{ik}(0)=\varepsilon_{ik}$ is non-singular,  $g$ is a
pseudo-Riemannian metric on some neighborhood of the origin. Let $g_{ij/k}:=\partial_kg_{ij}$ and
$g_{ij/kl}:=\partial_k\partial_lg_{ij}$. The Christoffel symbols of the first kind are:
$$
\Gamma_{ijk}:=g(\nabla_{\partial_i}\partial_j,\partial_k)
       =\textstyle\frac12(g_{jk/i}+g_{ik/j}-g_{ij/k})\,.
$$
As $g=\varepsilon+O(|x|^2)$ and $\Gamma=O(|x|)$, we complete the proof of Theorem \ref{thm-1.1} by computing:
\medbreak\qquad
$R_{ijkl}=\{\partial_i\Gamma_{jkl}-\partial_j\Gamma_{ikl}\}+O(|x|^2)$
\medbreak\qquad\qquad
$=\textstyle\frac12\{g_{jl/ik}+g_{ik/jl}
    -g_{jk/il}-g_{il/jk}
    \}+O(|x|^2)$
\medbreak\qquad\qquad
$=
 \textstyle\frac16\{-A_{jikl}-A_{jkil}-A_{ijlk}-A_{iljk}$
\smallbreak\qquad\qquad\qquad
$   +A_{jilk}+A_{jlik}+A_{ijkl}+A_{ikjl}\}+O(|x|^2)$
\medbreak\qquad\qquad
$=\textstyle\frac16\{4A_{ijkl}-2A_{iljk}-2A_{iklj}\}+O(|x|^2)$
\medbreak\qquad\qquad
$ =A_{ijkl}+O(|x|^2)$.\hfill\qedbox

\medbreak The following fact will be used in the proof of Theorem \ref{thm-1.2}. Again, we include the proof for the
sake of completeness.
\begin{lemma}\label{lem-3.1}
Let $\MM$ be a conformally flat curvature model. There exists a pseudo-Riemannian manifold
$\mathcal{M}$ and a point $P$ of $M$ so that $\mathcal{M}$ is conformally flat, and so that
$\MM\approx\MM(\mathcal{M},P)$.
\end{lemma}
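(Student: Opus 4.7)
The plan is to realize $\MM$ as the curvature model at the origin of a metric obtained by a conformal deformation of the flat pseudo-Euclidean metric $\varepsilon$ on $V$. Since any such deformation is conformally flat by construction, the only real issue is to arrange the Ricci tensor at the base point to agree with $\rho_\MM$: once this is done, the algebraic Weyl decomposition---which expresses an algebraic curvature tensor as its Weyl part plus a universal linear expression in the Ricci tensor, the scalar curvature, and the metric---forces the full curvature tensor at the base point to equal $A$, since both $A$ and $R_P$ then have vanishing Weyl part, the same Ricci tensor, and the same metric.

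\noindent\textbf{Construction and Ricci at $P$.} Let $\varepsilon$ denote the constant pseudo-Riemannian metric on $V$ determined by $\langle\cdot,\cdot\rangle$ in the basis $\{e_i\}$, let $M$ be a neighborhood of $0\in V$, take $P:=0$, and put $g:=e^{2f}\varepsilon$ where $f$ is a polynomial on $V$ with $f(0)=0$ and $df(0)=0$. Then $\mathcal{M}:=(M,g)$ is conformally flat and $g_P=\langle\cdot,\cdot\rangle$. Using the standard conformal formula $\Gamma^k_{ij}(g)=\partial_if\,\delta^k_j+\partial_jf\,\delta^k_i-\varepsilon^{kl}\partial_lf\,\varepsilon_{ij}$ (which vanishes at $P$ since $df(0)=0$) and repeating the curvature calculation as in the proof of Theorem \ref{thm-1.1}, one obtains
$$
\rho_{\mathcal{M}}(P)_{ij}=-(m-2)\,\partial_i\partial_jf(0)-\varepsilon^{kl}\partial_k\partial_lf(0)\cdot\varepsilon_{ij}.
$$

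\noindent\textbf{Solving for $f$.} For $m\ge 3$, the linear map $H\mapsto-(m-2)H-\operatorname{tr}_\varepsilon(H)\,\varepsilon$ on symmetric bilinear forms is invertible: taking its $\varepsilon$-trace gives $-2(m-1)\operatorname{tr}_\varepsilon(H)$, from which $\operatorname{tr}_\varepsilon(H)$, and hence $H$ itself, is recovered uniquely from the output. Solving for $H$ with $-(m-2)H-\operatorname{tr}_\varepsilon(H)\varepsilon=\rho_\MM$ and setting $f(x):=\tfrac12 H_{ij}x^ix^j$ produces $\rho_{\mathcal{M}}(P)=\rho_\MM$, which together with the Weyl-decomposition remark above yields $R_P=A$ and hence $\MM\approx\MM(\mathcal{M},P)$ via the identity of $V$ with $T_PM$. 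In dimensions $m\le 2$ the Weyl tensor vanishes automatically and the Ricci tensor is a scalar multiple of the metric; a one-parameter adjustment of the same conformal Ansatz (matching only $\tau_\MM$) handles these degenerate cases.

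\noindent\textbf{Main obstacle.} The one step that requires care is the conformal-change formula for $\rho_{\mathcal{M}}$ at $P$; the clean expression above hinges on the convenient vanishing of $f$ and $df$ at $P$, which collapses all the first-order terms. Everything else is linear algebra together with the standard fact that an algebraic curvature tensor with vanishing Weyl part is determined by its Ricci tensor and metric.
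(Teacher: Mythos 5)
Your proposal is correct and follows essentially the same route as the paper: both realize $\MM$ by a conformal deformation of the flat metric $\varepsilon$ with a quadratic conformal factor vanishing to second order at $P$, chosen so that the Ricci tensor at $P$ equals $\rho_\MM$, and both then invoke the fact that a conformally flat algebraic curvature tensor is determined by its Ricci tensor and the metric. The only cosmetic differences are that you write the factor as $e^{2f}$ rather than $1+\phi$ and invert the linear map $H\mapsto-(m-2)H-\operatorname{tr}_\varepsilon(H)\,\varepsilon$ abstractly where the paper displays the resulting $\phi$ explicitly.
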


\begin{proof} If $A$ is conformally flat, then $A$ is completely determined by its Ricci tensor. Let
$g:=(1+\phi(x))\langle\cdot,\cdot\rangle$ where $\phi$ is quadratic. The metric $g$ is non-singular for $x$ small, $g$ is
conformally flat, and
$\phi$ can be chosen appropriately so that $\rho(0)=\rho_\MM$:
{$$\phi=\sum_j\frac{\varepsilon_{jj}\tau+(2-2m)\rho_{\MM,jj}}{2(m-1)(m-2)}x_j^2
+\sum_{i<j}\frac{2}{2-m}\rho_{\MM,ij}x_ix_j\,.$$}
The proof now follows. \end{proof}

If $\mathcal{M}$ is a pseudo-Riemannian manifold and if $\phi$ is a smooth function so that $1+2\phi$ never vanishes,
we can consider the conformal variation 
$$\mathcal{M}_\phi=(M,(1+2\phi)g)\,.$$
The metrics constructed to prove Theorem \ref{thm-1.1} and Lemma \ref{lem-3.1} were quadratic polynomials and hence real
analytic.  Theorem \ref{thm-1.2} and Theorem \ref{thm-1.3} will follow from Theorem \ref{thm-1.1} and from Lemma \ref{lem-3.1},
respectively, and from the following result which is perhaps of interest in its own right:

\begin{theorem}\label{thm-3.2}
Let $\mathcal{M}$ be a real analytic pseudo-Riemannian manifold. Fix a point $P$ of $M$. There exists an open neighborhood
$\mathcal{O}$ of
$P$ in
$M$ and a real-analytic function $\phi$ so that $1+2\phi>0$ on $\mathcal{O}$, so that $(\mathcal{O},(1+2\phi)g)$ has constant
scalar curvature, and so that $\MM(\mathcal{O},(1+2\phi)g,P)=\MM(\mathcal{O},g,P)$.
\end{theorem}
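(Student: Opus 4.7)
The plan is to construct $\phi$ as the solution of a quasi-linear second-order PDE which encodes the condition $\tilde\tau\equiv\tau(P)$ for the conformally rescaled metric $\tilde g:=(1+2\phi)g$, where $\tau$ denotes the scalar curvature of $g$. I will invoke Theorem \ref{thm-2.1} with vanishing Cauchy data on a non-characteristic hyperplane and then observe that the resulting $\phi$ automatically vanishes to second order at $P$, which is exactly what is required to preserve the curvature model $(T_PM,g_P,R_P)$.

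First I would pick real-analytic coordinates $(x_1,\ldots,x_m)$ centered at $P$ with $g_{ij}(P)=\varepsilon_{ij}$, so that $g^{mm}(P)=\varepsilon^{mm}=\pm 1$ and the hyperplane $\{x_m=0\}$ is non-characteristic for $\Delta_g$. Substituting $\omega:=\tfrac12\log(1+2\phi)$ into the standard conformal change formula
$$\tilde\tau=(1+2\phi)^{-1}\bigl[\tau-2(m-1)\Delta_g\omega-(m-1)(m-2)|\nabla\omega|_g^2\bigr]$$
and clearing denominators, the requirement $\tilde\tau=\tau(P)$ becomes
$$-2(m-1)(1+2\phi)\,g^{ij}(x)\,\partial_i\partial_j\phi+\Psi(x,\phi,\nabla\phi)=0,$$
where $\Psi$ is real analytic and depends polynomially on $\phi$ and $\nabla\phi$. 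Because the principal coefficient equals $-2(m-1)\varepsilon^{mm}\ne 0$ at the origin with $\phi=0$, Theorem \ref{thm-2.1} produces a unique real-analytic $\phi$ on a neighborhood $\mathcal{O}$ of $P$ solving this equation with Cauchy data $\phi(y,0)=0$ and $\partial_m\phi(y,0)=0$.

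By continuity $1+2\phi>0$ after shrinking $\mathcal{O}$, and by construction $(\mathcal{O},(1+2\phi)g)$ has constant scalar curvature $\tau(P)$. It remains to verify that $\phi$ vanishes to second order at $P$. Since the Cauchy data kill $\phi$ and $\partial_m\phi$ identically along $\{x_m=0\}$, all tangential first and second partial derivatives of $\phi$ vanish at $P$, as do the mixed second partials $\partial_i\partial_m\phi(P)$ for $i<m$; only $\partial_m^2\phi(P)$ is a priori undetermined. Evaluating the PDE at $x=0$ with $\phi(P)=0$ and $\nabla\phi(P)=0$ kills every nonlinear term and every Christoffel contribution coming from $\Delta_g$, leaving
$$-2(m-1)\varepsilon^{mm}\,\partial_m^2\phi(P)+\bigl(\tau(P)-\tau(P)\bigr)=0,$$
which forces $\partial_m^2\phi(P)=0$. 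Thus $\phi$ vanishes to second order at $P$, so $\tilde g$ and $g$ share the same 2-jet at $P$; in particular $\tilde g_P=g_P$ and $\tilde R_P=R_P$, and the curvature model at $P$ is preserved.

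The main obstacle I anticipate is the bookkeeping in the last step, namely checking that the inhomogeneous term $\Psi$ of the PDE really evaluates to $\tau(P)-\tau(P)=0$ at the origin under zero Cauchy data. This identity is precisely the reason the target constant must be $\tau(P)$ itself rather than an arbitrary constant: any other choice would produce a nonzero forcing term at the origin, hence a nonzero $\partial_m^2\phi(P)$, and would thereby destroy $R_P$ under the conformal rescaling.
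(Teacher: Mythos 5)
Your proposal is correct and follows essentially the same route as the paper: conformally rescale by $1+2\phi$, impose $\tilde\tau=\tau(P)$ as a second-order equation whose $\phi_{mm}$-coefficient is $(2-2m)\varepsilon^{mm}\neq 0$, solve by Cauchy--Kovalevskaya with zero Cauchy data on $\{x_m=0\}$, and then use the equation at the origin to kill $\partial_m^2\phi(P)$ so that the $2$-jet of the metric (hence $R_P$) is unchanged. The only cosmetic difference is that you write out the explicit conformal change formula via $\omega=\tfrac12\log(1+2\phi)$, whereas the paper computes the same principal symbol by working modulo terms vanishing at the origin.
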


\begin{proof} Let $R$ be the curvature tensor of $g$ and
let $\tau$ be the scalar curvature of $g$. Let $x=(x_1,...,x_m)$ be a system of local real analytic coordinates on $M$ centered
at
$P$ and let $y=(x_1,...,x_{m-1})$. Let
$\varepsilon_{ij}:=g(\partial_i,\partial_j)(0)$. By
making a linear change of coordinates, we may suppose that $\{\partial_i\}$ is an orthonormal frame at $P$, or, in other
words, that
$$\varepsilon_{ij}=\left\{\begin{array}{lll}
0&\text{if}&i\ne j,\\
\pm1&\text{if}&i=j\,.\end{array}\right.
$$
Let
$\phi$ be a real analytic function. We set $\phi_i:=\partial_i\phi$ and $\phi_{ij}:=\partial_i\partial_j\phi$. We assume
$$\phi(y,0)=0\quad\text{and}\quad \phi_m(y,0)=0\,.$$
We consider the conformal variation
$
h:=(1+2\phi)g$.
Since $\phi(0)=0$, $h$ is non-singular on some neighborhood of $0$.
Let  $\tilde R$ be the curvature tensor of $h$ and let $\tilde\tau$ be the scalar curvature of $h$. We work modulo terms
$\psi(x,\phi,\phi_1,...,\phi_m)$ where $\psi(0)=0$ to define an equivalence relation $\equiv$. Then
\begin{eqnarray*}
&&\tilde R_{ijkl}\equiv R_{ijkl}+g_{jl}\phi_{ik}-g_{il}\phi_{jk}
-g_{jk}\phi_{il}+g_{ik}\phi_{jl},\\
&&\tilde\tau-\tau_g(0)\equiv h^{il}h^{jk}\{g_{jl}\phi_{ik}-g_{il}\phi_{jk}
-g_{jk}\phi_{il}+g_{ik}\phi_{jl}\}\,.
\end{eqnarray*}
We set $h^{jk}=\varepsilon^{jk}$
and compute
$$
\varepsilon^{il}\varepsilon^{jk}\{\varepsilon_{jl}\phi_{ik}-\varepsilon_{il}\phi_{jk}
-\varepsilon_{jk}\phi_{il}+\varepsilon_{ik}\phi_{jl}\}
\equiv\varepsilon^{ik}\phi_{ik}-m\varepsilon^{jk}\phi_{jk}-m\varepsilon^{il}\phi_{il}+\varepsilon^{jl}\phi_{jl}\,.
$$
The coefficient of $\phi_{mm}$ is thus seen to be $(2-2m)\varepsilon^{mm}\ne0$. Consequently, Theorem \ref{thm-2.1} is
applicable and we may choose
$\phi$ to solve the equations:
$$
\tilde\tau-\tau_g(0)=0,\quad\phi(y,0)=0,\quad\partial_m\phi(y,0)=0\,.
$$

The
$0$ and $1$ jets of $\phi$ vanish at the origin. And the only possibly non-zero $2$-jet of $\phi$ at the origin is
$\phi_{mm}$. The relation $\psi^{ij}\phi_{ij}\equiv0$ implies
$\psi^{mm}\phi_{mm}(0)=0$. Thus all the $2$-jets of $\phi$ vanish at the origin so
$\tilde R(0)=R(0)$ and $h(0)=g(0)$. Theorem \ref{thm-3.2} now
follows.\end{proof}

\section{The proof of Theorems \ref{thm-1.4} and \ref{thm-1.5}}\label{sect-4}

We begin our discussion by normalizing the $2$-jets appropriately:
\begin{lemma}\label{lem-4.1}
\ \begin{enumerate}
\item Let ${\CC=(V,\langle\cdot,\cdot\rangle,J,A)}$ be a pseudo-Hermitian (resp. para-Hermitian) curvature model. There
exists a real analytic almost pseudo-Hermitian (resp. almost para-Hermitian) manifold
$\mathcal{C}=(M,g,J)$ and a point $P$ of $M$ so
${\CC\approx(T_PM,g_P,J_P,R_P)}$.
\item Let $\QQ=(V,\langle\cdot,\cdot\rangle,A,\mathcal{J})$ be an hyper-pseudo-Hermitian (resp.
hyper-para-Hermitian) curvature model. There exists a real analytic almost hyper-pseudo-Hermitian (resp. almost
hyper-para-Hermitian) manifold
$\mathcal{Q}$ and a point $P$ of $M$ so
${\QQ\approx(T_PM,g_P,\mathcal{J}_P,R_P)}$.
\end{enumerate}\end{lemma}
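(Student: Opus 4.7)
The plan is to re-use the metric from the proof of Theorem~\ref{thm-1.1}, which already realizes the curvature tensor at the origin, and to adjust the almost complex (respectively almost hyper-complex) structure pointwise by a gauge transformation so that compatibility with this metric is restored. Identify $M$ with a small neighborhood of $0\in V$ via a basis, set $P=0$, and take $g_{ik}(x):=\varepsilon_{ik}-\tfrac13 A_{ijlk}x^jx^l$. Let $\delta=+1$ in the pseudo-Hermitian case and $\delta=-1$ in the para-Hermitian case (so that $J^*\langle\cdot,\cdot\rangle=\delta\langle\cdot,\cdot\rangle$), and let $W\subset S^2V^*$ be the subspace of bilinear forms $h$ satisfying the appropriate pointwise $J$-compatibility (or simultaneous compatibility with $J_1,J_2,J_3$ in the hyper case). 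Pick a linear complement so that $S^2V^*=W\oplus W^\perp$, with projection $\Pi:S^2V^*\to W$.

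The main step is to find a real analytic map $U:(M,0)\to(\mathrm{GL}(V),\mathrm{id})$ such that $U^*g\in W$ on a neighborhood of $0$, where $(U^*g)(x)(u,v):=g(x)(U(x)u,U(x)v)$. Encode this as
$$F(U,x):=(I-\Pi)\bigl(U^*g(x)\bigr)=0\quad\text{in }W^\perp.$$
Since $g(0)=\varepsilon\in W$, $F(\mathrm{id},0)=0$. The derivative $\partial_UF|_{(\mathrm{id},0)}$ sends $\xi\in\mathrm{End}(V)$ to the $W^\perp$-projection of $(u,v)\mapsto\varepsilon(\xi u,v)+\varepsilon(u,\xi v)$; since the symmetrization map $\mathrm{End}(V)\to S^2V^*$ is already surjective, so is $\partial_UF$. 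The real analytic implicit function theorem then produces the desired $U$.

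With $U$ in hand, set $J(x):=U(x)JU(x)^{-1}$ (respectively $J_i(x):=U(x)J_iU(x)^{-1}$ for $i=1,2,3$). Conjugation preserves the identities $J^2=\pm\mathrm{id}$ and the (para-)quaternion relations, while the condition $U^*g\in W$ translates, via $u=U(x)u'$, $v=U(x)v'$, into the pointwise compatibility $g(x)(J(x)u,J(x)v)=\delta\,g(x)(u,v)$ and the analogous statements for each $J_i(x)$. At $P$, $U(0)=\mathrm{id}$ gives $J(0)=J$, $g(0)=\langle\cdot,\cdot\rangle$, and $R(g)(0)=A$, so the constructed $\mathcal{C}=(M,g,J)$ or $\mathcal{Q}=(M,g,\mathcal{J})$ realizes the given curvature model. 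The main obstacle is the linear-algebraic surjectivity claim about $\partial_UF$, which reduces to the fact that $\xi\mapsto\varepsilon(\xi\cdot,\cdot)+\varepsilon(\cdot,\xi\cdot)$ already surjects onto $S^2V^*$; once this is in hand, the implicit function theorem delivers the conclusion formally.
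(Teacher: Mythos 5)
Your proposal is correct, and at the top level it follows the same strategy as the paper: keep the metric $g_{ik}=\varepsilon_{ik}-\frac13A_{ijlk}x^jx^l$ from Theorem \ref{thm-1.1}, extend $J$ (resp.\ $J_1,J_2,J_3$) to be constant in the coordinate frame, and restore compatibility by conjugating with a pointwise real analytic gauge transformation equal to the identity at $P$, so that the metric, the structure, and the curvature at $P$ are unchanged. Where you differ is in how the gauge is manufactured. The paper writes $g(x,y)=\langle\Psi x,y\rangle$ and produces a real analytic self-adjoint square root $\psi=S(\Psi)$ by applying the inverse function theorem to the squaring map on $M_m(\mathbb{R})$ at the identity; then $g=\psi^*\langle\cdot,\cdot\rangle$ exactly, so the gauge $\psi^{-1}$ pulls $g$ back to the \emph{constant} form $\langle\cdot,\cdot\rangle$, which is compatible with $J$ --- and with all three structures at once --- by hypothesis, so the hyper case in Assertion (2) costs nothing extra. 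Your version asks only that $U^*g$ land in the compatibility subspace $W$ rather than equal $\varepsilon$, at the price of redefining $W$ in the hyper case and of checking surjectivity of $\xi\mapsto\varepsilon(\xi\cdot,\cdot)+\varepsilon(\cdot,\xi\cdot)$ onto $S^2V^*$; that check is right, since nondegeneracy of $\varepsilon$ identifies $\operatorname{End}(V)$ with all bilinear forms, under which the map becomes $B\mapsto B+B^t$. Two small points you should make explicit, neither of which is a real gap: the implicit function theorem wants a bijective rather than merely surjective partial derivative, so restrict $U$ to $\operatorname{id}+K$ with $K$ a complement of the kernel of the linearization; and the base point condition $F(\operatorname{id},0)=0$ holds precisely because $\varepsilon=\langle\cdot,\cdot\rangle$ lies in $W$ by the definition of the curvature model.
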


\begin{proof} We consider the squaring map $T:\Psi\rightarrow\Psi^2$ mapping $M_m(\mathbb{R})\rightarrow M_m(\mathbb{R})$.
We localize at the point $\Psi=\operatorname{id}$ and express $(1+\phi)\rightarrow(1+2\phi+\phi^2)$ to see the Jacobean is
multiplication by $2$ and hence invertible. Thus by the inverse function theorem, there is a real analytic map
$S:M_m(\mathbb{R})\rightarrow M_m(\mathbb{R})$ defined near $\operatorname{id}$ so $S(\Psi)^2=\Psi$. Furthermore if
$\psi^2=\Psi$ and if $\psi$ is close to $\operatorname{id}$, then $\psi=S(\Psi)$.

{Suppose given a complex model
$\CC=(V,\langle\cdot,\cdot\rangle,J,A)$}. Set $\varrho=-1$ if $\CC$ is pseudo-Hermitian and
$\varrho=+1$ if $\CC$ is para-Hermitian. 
We use Theorem
\ref{thm-1.1} to choose an analytic pseudo-Riemannian metric $g$ so that $g_P=\langle\cdot,\cdot\rangle$ and $R_P=A$. The
difficulty now is to extend
$J$ to be a suitable structure $J_1$ on $TM$. First extend $J$ and $\langle\cdot,\cdot\rangle$ to a neighborhood of $P$
to be constant with respect to the coordinate frame. Express $g(x,y)=\langle\Psi x,y\rangle$ for $\Psi$ a real
analytic map defined near $P$ taking values in
$M_m(\mathbb{R})$ with $\Psi(P)=\operatorname{id}$. Let $\psi=S(\Psi)$. Since $\Psi^*=\Psi$, $\psi^*=\psi$.
Consequently $g(x,y)=\langle\psi x,\psi y\rangle$ so $g=\psi^*\langle\cdot,\cdot\rangle$.
Set $J_1:=\psi^{-1}J\psi=\psi^*J$. Then
\begin{eqnarray*}
&&J_1^2=(\psi^*J)^2={\psi^{-1}J\psi\psi^{-1}J\psi={\varrho}\operatorname{id}},\\
&&J_1^*g=(\psi^*J)^*\{\psi^*\langle\cdot,\cdot\rangle\}=\psi^*\{J^*\langle\cdot,\cdot\rangle\}=-\psi^*\varrho\langle\cdot,\cdot\rangle
=-{\varrho }g\,.
\end{eqnarray*}
Thus $(M,g,J_1)$ provides the required structure. Assertion (1) follows; we use the same
construction to prove Assertion (2).\end{proof}

Let $\mathcal{C}:=(M,g,J)$ be an almost pseudo-Hermitian [$\varrho=-1$] or an almost
para-Hermitian $[\varrho=+1]$ manifold. Let $2r=m$ and let $\{x_1,...,x_{2r}\}$ be coordinates centered at
$P\in M$ so that
$\{\partial_i\}$ form an orthonormal frame at $P$ and so
$$J(\partial_i)=\left\{\begin{array}{rll}
\partial_{i+r}&\text{if}&i\le r,\\
\varrho\partial_{i-r}&\text{if}&r<i\le m=2r\,.
\end{array}\right.$$
We consider an almost pseudo-Hermitian  (resp. almost para-Hermitian) variation
$$h_{\xi,\eta}:=g+2\xi\{dx_1\circ dx_1-\varrho Jdx_1\circ Jdx_1\}+2\eta\{dx_m\circ dx_m-\varrho Jdx_m\circ Jdx_m\}$$
where $\xi(P)=0$ and $\eta(P)=0$. Theorem \ref{thm-1.4} will follow from Lemma \ref{lem-4.1} and from:

\begin{theorem}\label{thm-4.2} 
Let $(M,g,J)$ be a real analytic almost pseudo-Hermitian (resp. almost para-Hermitian) manifold. Fix $P$ in $ M$. There exists
an open neighborhood
$\mathcal{O}$ of
$P$ in
$M$ and there exist $\xi,\eta\in C^\infty(\mathcal{O})$ so that:
\begin{enumerate}
\item \{$\xi,\eta\}$ vanish to second order at $P$.
\item Both $\tau$ and $\tau^\star$ are constant for $(\mathcal{O},h_{\xi,\eta},J)$.
\end{enumerate}\end{theorem}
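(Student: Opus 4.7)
The plan is to apply the Cauchy--Kovalevskaya theorem (Theorem \ref{thm-2.1}) with $W=\mathbb{R}^2$ and unknown $U=(\xi,\eta)$, imposing the two scalar equations $\tilde\tau(x)=\tau(P)$ and $\tilde\tau^\star(x)=\tau^\star(P)$ together with the Cauchy data $\xi(y,0)=\eta(y,0)=0$ and $\partial_m\xi(y,0)=\partial_m\eta(y,0)=0$. The variation $h_{\xi,\eta}$ has been constructed precisely so that $J$ is automatically a compatible structure on the deformed manifold for every choice of $(\xi,\eta)$: in the adapted coordinates at $P$ one has $Jdx_1=\varrho\,dx_{r+1}$ and $Jdx_m=dx_r$, so the perturbation only modifies the four diagonal entries $g_{11}$, $g_{r+1,r+1}$, $g_{mm}$, $g_{rr}$. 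These four indices are distinct because $m=2r\ge 4$, a fact that will be essential below, and since $J$ itself is unchanged the quantity $\tau^\star$ remains well defined.

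Next, following the template of the proof of Theorem \ref{thm-3.2}, I would expand $\tilde\tau-\tau(P)$ and $\tilde\tau^\star-\tau^\star(P)$ modulo terms of the form $\psi(x,\xi,\eta,\partial_i\xi,\partial_i\eta)$ with $\psi(0)=0$, so as to isolate the principal part in the second derivatives of $\xi$ and $\eta$. The standard linearization of scalar curvature under the diagonal perturbation yields principal-symbol coefficients that are polynomials in $\varrho$ and the four signs $\varepsilon^{11},\varepsilon^{r+1,r+1},\varepsilon^{rr},\varepsilon^{mm}$; an analogous but $J$-twisted computation produces the corresponding coefficients for $\tilde\tau^\star$. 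Collecting the $(\partial_m^2\xi,\partial_m^2\eta)$-coefficients yields the $2\times 2$ principal-symbol matrix $\psi^{mm}(0)\in\operatorname{End}(\mathbb{R}^2)$. The main obstacle is verifying that this matrix is invertible at $P$. The key point is that the $J$-twist in $\tau^\star$ pairs $Je_k$ against $Je_l$ and therefore permutes the four perturbed diagonal slots among themselves, assigning different relative weights to the $\xi$- and $\eta$-contributions than $\tau$ does; consequently the two rows of $\psi^{mm}(0)$ should be linearly independent. Carrying out the explicit computation of the resulting $2\times 2$ determinant is the tedious but essential linear algebra step that I expect to be the crux of the argument.

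Once invertibility is verified, Theorem \ref{thm-2.1} produces a unique real-analytic pair $(\xi,\eta)$ on some neighborhood $\mathcal{O}$ of $P$ satisfying both equations with the prescribed Cauchy data, so $\tilde\tau\equiv\tau(P)$ and $\tilde\tau^\star\equiv\tau^\star(P)$ on $\mathcal{O}$, establishing conclusion (2). For conclusion (1), the vanishing Cauchy data kills the $0$- and $1$-jets of $(\xi,\eta)$ on $\{x_m=0\}$ as well as every component of the $2$-jet at $P$ except possibly $\partial_m^2\xi(P)$ and $\partial_m^2\eta(P)$. Evaluating the PDE system at $P$, where all lower-order arguments vanish, reduces it to the identity $\psi^{mm}(0)\bigl(\partial_m^2\xi(P),\partial_m^2\eta(P)\bigr)=0$, and invertibility of $\psi^{mm}(0)$ then forces both of these remaining $2$-jets to vanish as well, exactly mirroring the closing step of Theorem \ref{thm-3.2}. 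Hence the full $2$-jets of $\xi$ and $\eta$ vanish at $P$, and Theorem \ref{thm-4.2} follows.
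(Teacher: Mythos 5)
Your strategy coincides with the paper's: set up a two--component Cauchy--Kovalevskaya system for $(\xi,\eta)$ with the equations $\tilde\tau=\tau(P)$ and $\tilde\tau^\star=\tau^\star(P)$, vanishing Cauchy data on $\{x_m=0\}$, and then observe that the PDE evaluated at $P$ kills the residual $2$-jets $\xi_{mm}(P)$ and $\eta_{mm}(P)$. Your identification of the four perturbed diagonal entries $g_{11},g_{r+1,r+1},g_{rr},g_{mm}$ (distinct since $m\ge4$) and your closing jet argument are both correct and match the paper.

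The gap is that you never verify the one substantive hypothesis of the whole argument: the invertibility of the $2\times2$ matrix $\psi^{mm}(0)$. You only say you \emph{expect} the two rows to be independent because the $J$-twist redistributes weights among the perturbed slots. That is a heuristic, not a proof --- a priori the redistribution could conspire to make the determinant vanish, and the theorem stands or falls on this single computation. The paper carries it out: writing $h=g+2\Theta$ and using $R_{ijkl}=\Theta_{ik/jl}+\Theta_{jl/ik}-\Theta_{il/jk}-\Theta_{jk/il}+\dots$, the relevant curvatures are $R_{m11m}=-\xi_{mm}+\dots$, $R_{m,r+1,r+1,m}=\varrho\xi_{mm}+\dots$, and $R_{mrrm}=\varrho\eta_{mm}+\dots$, which yield, in both the pseudo-Hermitian and the para-Hermitian cases,
$$\tau=-4\varepsilon^{11}\varepsilon^{mm}\xi_{mm}-2\eta_{mm}+\dots,\qquad
\tau^\star=0\cdot\xi_{mm}-2\eta_{mm}+\dots\,.$$
The decisive point is not merely that the weights differ but that the $\xi_{mm}$-contribution to $\tau^\star$ cancels entirely, so the principal-symbol matrix is triangular with determinant $8\varepsilon^{11}\varepsilon^{mm}\ne0$. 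Until you produce this (or an equivalent) computation, the proposal is an outline of the paper's proof rather than a proof.
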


Note that by (1), $h=h_{\xi,\eta}$ is non-singular near $P$ and $R_h(P)=R_g(P)$.

\begin{proof} {If $h=g+2\Theta$, we have}
$$R_{ijkl}=\Theta_{ik/jl}+\Theta_{jl/ik}-\Theta_{il/jk}-\Theta_{jk/il}+...\,.$$
Thus the non-zero curvatures of interest are, up to the usual $\mathbb{Z}_2$ symmetries,
$${R_{mrrm}=\varrho\eta_{mm}+...,\quad
  R_{m11m}=-\xi_{mm}+...,\quad
  R_{m,r+1,r+1,m}=\varrho\xi_{mm}+...\,.}$$
This leads to the same formulas in both the pseudo-Hermitian and in the para-Hermitian settings:
$$\begin{array}{ll}
\tau=-4\varepsilon^{11}\varepsilon^{mm}\xi_{mm}-2\eta_{mm}+...,\\
\tau^\star=\phantom{-}0\varepsilon^{11}\varepsilon^{mm}\xi_{mm}-2\eta_{mm}+...\,.
\end{array}$$
These two equations are linearly independent. Consequently the vector valued version of the Cauchy-Kovalevskaya theorem implies
we can solve 
$$\tau^h-\tau^g(0)=0\quad\text{and}\quad\tau^{\star,h}-\tau^{\star,g}(0)=0$$
with
${\xi(y,0)=\xi_m(y,0)}=\eta(y,0)=\eta_m(y,0)=0$. Again, the only possible non-zero $2$-jet is $\eta_{mm}$ and $\xi_{mm}$
and those are seen to be zero by the equation.
\end{proof}

The proof of Theorem \ref{thm-1.5} follows similar lines. Let $J_i^2=\varrho_i\operatorname{id}$. We may decompose
$V=V_1\oplus...\oplus V_\ell$ where $4\ell=m$ and where each $V_i$ is invariant under the structure $\mathcal{J}$. We set
$$\Xi_i:=dx_i\circ dx_i-\varrho_1J_1^*dx_i\circ J_1^*dx_i-\varrho_2J_2^*dx_i\circ J_2^*dx_i-\varrho_3J_3^*dx_i\circ
J_3^*dx_i\,.$$ We then consider
variations of the form $h_{\xi,\eta}:=g+2\xi\Xi_1+2\eta\Xi_m$.
It is then immediate that $h$ is invariant under the action of $\mathcal{J}$. We prove Theorem \ref{thm-1.5} by computing:
\medbreak\qquad
$\tau=-8\varepsilon^{11}\varepsilon^{mm}\xi_{mm}-6\eta_{mm}+...$,
\qquad
$\tau^\star=0\xi_{mm}-6\eta_{mm}+....$\hfill\qedbox

\section*{Acknowledgments} The research of M. Brozos-V\'azquez and of P. Gilkey partially supported by Project MTM2006-01432
(Spain). Research of H. Kang partially supported by the University of Birmingham (UK). Research of S. Nik\v cevi\'c partially
supported by Research of Project 144032 (Serbia). Research of G. Weingart is supported by PAPIIT (UNAM) through research
project IN115408.

\end{document}